\newtheorem{thm}{Theorem}
\newtheorem{prop}[thm]{Proposition}
\newtheorem{cor}[thm]{Corollary}
\newtheorem{defn}[thm]{Definition}
\numberwithin{thm}{section}
\newcommand{\ZZ}{\mathbb{Z}}      
\newcommand{\Des}{\mathrm{Des}}
\newcommand{\Inv}{\mathrm{Inv}}
\newcommand{\Adj}{\mathrm{Adj}}
\newcommand{\inv}{\mathrm{\Lambda}}
\newcommand{\Id}{\mathfrak{1}}
\title{A Counter Example to the Shuffle Compatiblity Conjecture}
\author[Kantarci Oguz]{Ezgi Kantarci O\u{g}uz}
\address{Department of Mathematics, University of Southern California, 3620 South Vermont Avenue, Los Angeles, CA 90089-2532, U.S.A.}
\email{kantarci@usc.edu}
\date{June 2018}
\begin{document}
\maketitle 
\begin{abstract} The shuffle product has a connection with several useful permutation statistics such as descent and peak, and corresponds to the multiplication operation in the corresponding descent and peak algebras. In their recent work, Gessel and Zhuang formalized the notion of shuffle-compatibility and studied various permutation statistics from this viewpoint. They further conjectured that any shuffle compatible permutation statistic is a descent statistic. In this note we construct a counter-example to this conjecture.
\end{abstract}

\section{Introduction}

A permutation $\sigma=\sigma_1 \sigma_2 \cdots \sigma_n$ of size $|\sigma|=n$ is a sequence of $n$ distinct integers. We denote the permutations of size $n$ with $P_n$, and set $P=\bigcup_n P_n$. Two permutations $\sigma$ and $\phi$ of the same size are said to be \emph{equivalent} if they have the same relative order, denoted $\sigma \sim \phi$. For example $1342\sim2894$, but they are not equivalent to $2891$, as the order of the first and last integers is different. Note that every permutation of size $n$ is equivalent to exactly one permutation of the numbers $1,2,\ldots,n$.

Two permutations said to be \emph{disjoint} if they do not share a number.  Given two disjoint permutations $\sigma$ and $\phi$, a \emph{shuffle} of $\sigma$ and $\phi$ is a permutation of size $|\sigma|+|\phi|$ that contains both $\sigma$ and $\phi$ as subsequences. We denote the set of shuffles of $\sigma$ and $\phi$ with $\sigma \shuffle \phi$.

A function $st$ on permutations is said to be a \emph{permutation statistic} if $st(\sigma)=st(\phi)$ whenever $\sigma \sim \phi$. Some examples of permutation statistics are defined below:

\begin{eqnarray*}
\Id(\sigma)&=&1,\\
\Des(\sigma)&=&\{i \mid \sigma_i>\sigma_{i+1}\},\\
\Inv(\sigma)&=&\{(i,j) \mid i<j \text{ and } \sigma_i>\sigma_j\}.\\
\end{eqnarray*}

\begin{defn}[\cite{MR3810249}] A permutation statistic $st$ is said to be shuffle compatible if for all disjoint permutations $\sigma$ and $\phi$, the multiset $\{\{st(\gamma)|\gamma \in \sigma \shuffle \phi\}\}$ depends only on $st(\sigma), st(\phi), |\sigma|$ and $|\phi|$.
\end{defn}
 From the examples above, $\Id$ and $\Des$ are shuffle compatible, whereas $\Inv$ is not. A recent paper by Gessel and Zhuang \cite{MR3810249} provides an in-depth exploration of shuffle compatible permutation statistics. They conjecture that any shuffle compatible permutation statistic $st$ is a \emph{descent statistic}, meaning if $\sigma,\phi \in P_n$ satisfy $\Des(\sigma)=\Des(\phi)$, then $st(\sigma)=st(\phi)$. In this note we will construct a permutation statistic that is shuffle compatible, but not a descent statistic. 
 
 \begin{prop} Let $st$ be a shuffle compatible statistic. For $|\sigma|=|\phi| < 4$, $\Des(\sigma)=\Des(\phi)$ implies $st(\sigma)=st(\phi)$.
\end{prop}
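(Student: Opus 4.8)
The plan is to reduce everything to size $n=3$, since the cases $n\le 2$ are immediate. For $n=1$ there is a single permutation up to equivalence, so there is nothing to check. For $n=2$ the two permutations $12$ and $21$ have distinct descent sets ($\emptyset$ and $\{1\}$), so the hypothesis $\Des(\sigma)=\Des(\phi)$ forces $\sigma\sim\phi$ and hence $st(\sigma)=st(\phi)$ because $st$ is a permutation statistic. So the real content is at $n=3$.

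For $n=3$ I would first sort the six permutations by descent set: $\Des=\emptyset$ gives $\{123\}$; $\Des=\{1\}$ gives $\{213,312\}$; $\Des=\{2\}$ gives $\{132,231\}$; and $\Des=\{1,2\}$ gives $\{321\}$. The two singleton classes need no argument, so the whole proposition comes down to the two identities $st(213)=st(312)$ and $st(132)=st(231)$.

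The key tool is to shuffle a single point with an increasing pair. Since all singletons are equivalent and all increasing pairs are equivalent, every such shuffle has the same parameters $\bigl(st(\text{point}),\,st(\text{increasing pair}),\,1,\,2\bigr)$, so shuffle-compatibility forces their $st$-multisets to coincide. I would use the three shuffles
\[
3\shuffle 12=\{312,132,123\},\qquad 1\shuffle 23=\{123,213,231\},\qquad 2\shuffle 13=\{213,123,132\}.
\]
Comparing the first and third multisets and cancelling the common values $st(123)$ and $st(132)$ leaves $st(312)=st(213)$. Comparing the first and second multisets and cancelling $st(123)$ gives the equality $\{\{st(312),st(132)\}\}=\{\{st(213),st(231)\}\}$; feeding in the identity just obtained then yields $st(132)=st(231)$.

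The main obstacle — really the only delicate point — is choosing shuffles whose $st$-multisets overlap in exactly the right way so that the cancellations isolate one new equality at a time. A poorly chosen pair (for instance comparing only $3\shuffle 12$ with $1\shuffle 23$) leaves a two-element multiset equation with two possible matchings and does not by itself force $st(213)=st(312)$; introducing the third shuffle $2\shuffle 13$ is precisely what breaks this ambiguity.
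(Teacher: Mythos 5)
Your proof is correct and follows essentially the same route as the paper: both arguments reduce to the size-$3$ classes $\{213,312\}$ and $\{132,231\}$ and derive the two required equalities by comparing the $st$-multisets of the three shuffles of a singleton with an increasing pair ($12\shuffle 3$, $13\shuffle 2$, $23\shuffle 1$) and cancelling common entries. The only cosmetic difference is that the paper gets $st(231)=st(132)$ directly from comparing $23\shuffle 1$ with $13\shuffle 2$, whereas you route that step through the previously established equality $st(312)=st(213)$; both cancellations are valid.
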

\begin{proof}
For sizes $1$ and $2$, any two permutations with the same descent set are equivalent, so there is nothing to show. Let us focus on size $3$. As these are permutation statistics, we can limit our attention to permutations of $1,2,3$. There are two pairs of non-equivalent permutations with the same descent set: $213-312$ and $231-132$. The calculations below show that $st(213)=st(312)$ and $st(231)=st(132)$.
\begin{eqnarray*}
\{\{st(\sigma)\mid \sigma \in 12 \shuffle 3\}\}&=&\{\{st(\sigma)\mid|\sigma \in 13 \shuffle 2\}\}, \\
\Rightarrow \{\{st(123),st(132),st(312)\}\}&=&\{\{st(132),st(123),st(213)\}\},\\
\{\{st(\sigma)\mid \sigma \in 23 \shuffle 1\}\}&=&\{\{st(\sigma)\mid|\sigma \in 13 \shuffle 2\}\}, \\
\Rightarrow \{\{st(231),st(213),st(123)\}\}&=&\{\{st(132),st(123),st(213)\}\}.
\end{eqnarray*} \end{proof}

This proposition shows that the minimum size we can have permutations that have the same descent set, but different values for some shuffle compatible statistic is $4$. 

Let $\sigma$ be a permutation of $a_1<a_2<a_3<a_4.$ Set
\begin{eqnarray*}
\Inv_{12}(\sigma)&=&\begin{cases}
1 & \text{ if } a_1 $ is to the left of $ a_2,\\
-1 & \text{ otherwise},
\end{cases}\\
\Adj_{34}(\sigma)&=&\begin{cases}
1 & \text{ if } a_3 $ is adjacent to $ a_4,\\
-1 & \text{ otherwise,}
\end{cases}\\
\inv(\sigma)&=& \Inv_{21}(\sigma) \cdot \Adj_{34}(\sigma).
\end{eqnarray*}

For example, $\sigma=2413$ has $\Inv_{12}(\sigma)=-1$ and $\Adj_{34}(\sigma)=-1$, so $\inv(2413)=-1 \cdot -1=1$. For $\phi=1423$,  $\Inv_{12}(\phi)=1$ and $\Adj_{34}(\phi)=-1$, so $\inv(1423)=-1 \cdot -1=1$.

\begin{defn} We define a permutation statistic $\Psi:P\longrightarrow\ZZ$ as follows:
\begin{equation*}
    \Psi(\sigma)=\begin{cases}
    \inv(\sigma) & |\sigma|=4  ,\\
    1 & \text{otherwise}.
    \end{cases}
\end{equation*}
\end{defn}

\begin{prop} The function $\Psi$ is not a descent statistic.
\end{prop}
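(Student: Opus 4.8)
The plan is to exhibit a single pair of permutations $\sigma,\phi\in P_4$ with $\Des(\sigma)=\Des(\phi)$ but $\Psi(\sigma)\neq\Psi(\phi)$; by definition, this is exactly what it means for $\Psi$ to fail to be a descent statistic. Since $\Psi$ is identically $1$ on every size other than $4$, I would restrict attention to size $4$, where $\Psi$ coincides with $\inv=\Inv_{12}\cdot\Adj_{34}$. The strategy is thus to locate two permutations of $\{1,2,3,4\}$ that share a descent set yet disagree on one of the two factors $\Inv_{12},\Adj_{34}$ while agreeing on the other, so that their product flips sign.

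The reason such a pair must exist is that $\Inv_{12}$ and $\Adj_{34}$ record information the descent set cannot see: $\Inv_{12}$ depends only on whether the value $1$ precedes the value $2$, and $\Adj_{34}$ only on whether the values $3$ and $4$ sit in adjacent positions. Neither of these is determined by the up--down pattern of the word, so two permutations with identical descent sets are free to disagree on them. Concretely, I would take the pair already isolated in the examples, namely $\sigma=2413$ and $\phi=1423$. A direct check gives $\Des(2413)=\{2\}=\Des(1423)$, so the two permutations have the same descent set. In passing from $\sigma$ to $\phi$ the values $1$ and $2$ are interchanged (keeping $4$ and $3$ fixed in positions $2$ and $4$), so the $\Adj_{34}$ factor is unchanged while the $\Inv_{12}$ factor reverses; hence $\inv(2413)$ and $\inv(1423)$ have opposite signs, giving $\Psi(2413)\neq\Psi(1423)$ and establishing the proposition.

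The one delicate point, and the reason the pair cannot be chosen carelessly, is that toggling the relative order of $1$ and $2$ will in general also perturb the descent set, since relocating these values changes local comparisons. What makes $2413$ and $1423$ work is that $1$ and $2$ occupy positions $1$ and $3$, each flanked by the larger entries $4$ and $3$, which dominate them in either arrangement; consequently every descent comparison is decided by the large entries and is insensitive to the swap. Verifying this stability of the descent set under the exchange of the two smallest values is the only step requiring genuine care, and it reduces to the short positional check indicated above. With that in hand the conclusion is immediate.
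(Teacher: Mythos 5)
Your proposal is correct and takes essentially the same route as the paper: both exhibit the witness pair $\sigma=2413$, $\phi=1423$, check that $\Des(\sigma)=\Des(\phi)=\{2\}$, and observe that $\Psi(2413)=1\neq-1=\Psi(1423)$. The extra discussion of why swapping the values $1$ and $2$ preserves the descent set while flipping $\Inv_{12}$ is sound but not needed beyond the direct verification the paper gives.
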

\begin{proof} For $\sigma=2413$ and $\phi=1423$, $\Des(\sigma)=\Des(\phi)=\{2\}$, but $\Psi(\sigma)=1 \neq \Psi(\phi)=-1$.
\end{proof}

\begin{thm} The function $\Psi$ is shuffle compatible.
\end{thm}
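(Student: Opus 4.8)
The plan is to reduce everything to permutations of size four and then to a short, symmetry-organized finite check. First I would split on the size $N=|\sigma|+|\phi|$ of the shuffles. If $N\neq 4$, then every $\gamma\in\sigma\shuffle\phi$ has $|\gamma|\neq 4$, so $\Psi(\gamma)=1$ identically; the resulting multiset is $\binom{N}{|\sigma|}$ copies of $1$, which depends only on $|\sigma|$ and $|\phi|$. Hence shuffle compatibility is automatic away from size four, and the whole problem concentrates on $N=4$.

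For $N=4$, the cases $|\sigma|=0$ and $|\phi|=0$ give a unique shuffle and are immediate, so I may assume $1\le|\sigma|,|\phi|\le 3$. In each such case $|\sigma|,|\phi|<4$ forces $\Psi(\sigma)=\Psi(\phi)=1$, so the only thing to establish is that the multiset $\{\{\inv(\gamma):\gamma\in\sigma\shuffle\phi\}\}$ depends on nothing but the sizes. Since $\inv$ takes only the values $\pm 1$ and the number of shuffles $\binom{4}{|\sigma|}$ is fixed, this multiset is completely determined by the single integer $\sum_{\gamma}\inv(\gamma)$. Thus the theorem reduces to showing that this signed count is the same for all disjoint $\sigma,\phi$ of the given sizes; the examples suggest it is always $0$, i.e. the multiset is perfectly balanced, and that is the target.

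Two reductions trim the work. Because $\sigma\shuffle\phi=\phi\shuffle\sigma$ as sets, the multiset is symmetric in its arguments, so I may assume $|\sigma|\le|\phi|$ and treat only $(|\sigma|,|\phi|)\in\{(1,3),(2,2)\}$. Because $\inv$ is a permutation statistic I may also standardize $\sigma\cup\phi$ to $\{1,2,3,4\}$, leaving finitely many pairs. The key structural tool is reversal: for the reverse word $\gamma^{r}$ one has $\Inv_{12}(\gamma^{r})=-\Inv_{12}(\gamma)$ while $\Adj_{34}(\gamma^{r})=\Adj_{34}(\gamma)$, since adjacency is preserved under reversal, so $\inv(\gamma^{r})=-\inv(\gamma)$; moreover $\gamma\in\sigma\shuffle\phi$ iff $\gamma^{r}\in\sigma^{r}\shuffle\phi^{r}$. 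This identifies the $+1$-shuffles of $(\sigma,\phi)$ with the $-1$-shuffles of $(\sigma^{r},\phi^{r})$, so their signed counts are negatives of one another. This halves the list of representatives I must compute directly, and it shows that a common value, were it to exist, must equal its own negative, identifying $0$ as the only possible target.

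Finally I would verify the balance on the remaining representatives, organized by how the two smallest elements $\{1,2\}$ and the two largest $\{3,4\}$ are distributed between the parts. For $(2,2)$ a clean dichotomy appears: either both pairs lie together (then $\Inv_{12}$ is constant, and treating $\{3,4\}$ as a block shows exactly $\binom{3}{1}=3$ of the six shuffles keep them adjacent, giving three copies of each sign), or both pairs are split across the two parts. The split–split configuration, which occurs only for $(2,2)$ (e.g.\ $\sigma=13,\ \phi=24$), is the main obstacle: there neither $\Inv_{12}$ nor $\Adj_{34}$ is constant, and $\sum_{\gamma}\Adj_{34}(\gamma)$ need not vanish on its own, so the cancellation in $\sum_{\gamma}\inv(\gamma)$ arises purely from the correlation of the two factors and must be read off from the explicit list of six shuffles. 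The $(1,3)$ split has exactly one pair living in the size-three factor, so one factor is tame and the check over its four shuffles is routine. Once the split–split case is confirmed balanced, the signed count is $0$ in every size split, the multiset depends only on the sizes, and the proof is complete.
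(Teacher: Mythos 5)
Your argument is correct and follows essentially the same route as the paper: reduce to $|\sigma|+|\phi|=4$, exploit symmetries to cut down the representative pairs, and finish with an explicit check of the remaining shuffles (the paper tabulates them, using the $3\leftrightarrow 4$ swap where you use reversal and the signed-sum reformulation). The finite checks you defer do come out as claimed: the multiset is $\{\{1,1,-1,-1\}\}$ in every $(1,3)$ case and $\{\{1,1,1,-1,-1,-1\}\}$ in every $(2,2)$ case, so the signed count is indeed always $0$.
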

\begin{proof} Let $\sigma$ and $\phi$ be two permutations. Note that if $|\sigma|+|\phi|\neq 4$, the multiset $\{\{\Psi(\gamma)|\gamma \in \sigma \shuffle \phi\}\}$ contains only $1$s, and the number of those depends only on $|\sigma|$ and $|\phi|$. So we can focus on when $|\sigma|+|\phi|=4$. As we are working with a permutation statistic, it is enough to verify the result when the positive integers used are $1,2,3$ and $4$. Further note that the shuffle operation is symmetric, and $\Psi$ is symmetric under exchanging $3$ and $4$. 

\emph{Case 1: $|\sigma|=3$, $|\phi|=1$.} We claim that in this case $\{\{\Psi(\gamma)|\gamma \in \sigma \shuffle \phi\}\}=\{\{-1,-1,1,1\}\}$. If $\phi=3$, then independent of the placement of $4$, of the four elements of $\sigma \shuffle \phi$, exactly two have $3$ and $4$ adjacent, and the order of $1$ and $2$ is the same for all of them, so the claim holds. Same argument applies for the case $\phi=4$ by symmetry. The 6 other possibilities are illustrated at Table \ref{tab:1}, left.

\emph{Case 2: $|\sigma|=|\phi|=2$.} We claim that  $\{\{\Psi(\gamma)|\gamma \in \sigma \shuffle \phi\}\}=\{\{-1,-1,-1,1,1,1\}\}$. As exchanging $3$ and $4$ does not alter the $\Psi$ value, there are 6 possible pairings we need to consider, all illustrated in Table \ref{tab:1}, right. \end{proof}

\begin{table}[ht]
    \centering
    \begin{tabular}{c c}
    \begin{tabular}{|c|c|c|}
\hline
&$\Psi=+1$&$\Psi=-1$\\
\hline
$134\shuffle 2$ & $1234,1342$ & $1324, 2134$  \\
$314\shuffle 2$ & $2314,3214$ & $3124, 3142$ \\
$341\shuffle 2$ & $3412,3241$ & $3421, 2341$ \\
$234\shuffle 1$ & $1234,2314$ & $2341, 2134$  \\
$324\shuffle 1$ & $3241,3214$ & $3124, 1324$ \\
$342\shuffle 1$ & $3412,1342$ & $3421, 3142$ \\
\hline
\end{tabular} &    \begin{tabular}{|c|c|c|}
\hline
&$\Psi=+1$&$\Psi=-1$\\
\hline
$12\shuffle 34$ & $1234,1342,3412$ & $1324,3124,3142$ \\
$13\shuffle 24$ & $1234,1243,2413$ & $2134,2143,1324 $ \\
$13\shuffle 42$ & $4213, 1432,1342 $ & $4132,4123,1423 $ \\
$21\shuffle 34$ & $2314,3241,3214 $ & $2134,2341,3421 $  \\
$23\shuffle 14$ & $2314,1243,1234 $ & $1423,2134,2143 $ \\
$23\shuffle 41$ & $4213,2413,4231 $ & $4123,2341,2431 $ \\
\hline
\end{tabular}\\
    \end{tabular}
\vspace{3mm}
\caption{$\Psi$ values of shuffles of pairs $\gamma$ and $\phi$.}
    \label{tab:1}
\end{table}
\begin{cor} Conjecture 6.7 from \cite{MR3810249} is incorrect.
\end{cor}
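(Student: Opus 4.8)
The plan is to obtain this corollary as an immediate logical consequence of the two properties of $\Psi$ already established, so that no new construction is required. First I would pin down the exact content of the disputed statement: Conjecture 6.7 of \cite{MR3810249} asserts the universally quantified claim that \emph{every} shuffle compatible permutation statistic is a descent statistic --- equivalently, that shuffle compatibility forces $st(\sigma)=st(\phi)$ whenever $\Des(\sigma)=\Des(\phi)$. To refute a universal implication of this form it is enough to exhibit a single statistic that is shuffle compatible and yet fails to be a descent statistic.

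That witness is $\Psi$. By the preceding Theorem, $\Psi$ is shuffle compatible, so it lies in the class over which the conjecture quantifies. By the Proposition stating that $\Psi$ is not a descent statistic, equality of descent sets does not force equality of $\Psi$, as shown by $\Des(2413)=\Des(1423)=\{2\}$ together with $\Psi(2413)=1\neq-1=\Psi(1423)$. I would then combine these two facts directly: $\Psi$ is a shuffle compatible statistic for which equality of descent sets does not imply equality of values, contradicting the conjecture on this very pair of permutations. Hence the conjecture is false.

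The only point meriting attention is the logical shape of the argument rather than any computation. Since the conjecture is stated as a property holding for \emph{all} shuffle compatible statistics, producing one counterexample is both necessary and sufficient to contradict it, and no additional uniformity or genericity is needed. All of the substantive effort has already been absorbed into the preceding Theorem --- in particular the finite case analysis over shuffles of total size $4$ recorded in Table \ref{tab:1} --- and into the Proposition on $\Psi$, so here there is essentially no obstacle to overcome.
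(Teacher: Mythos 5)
Your proposal is correct and matches the paper's (implicit) argument exactly: the corollary follows immediately by combining the Theorem that $\Psi$ is shuffle compatible with the Proposition that $\Psi$ is not a descent statistic, so $\Psi$ witnesses the failure of the universally quantified conjecture. No further work is needed.
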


Note that this counter example mainly depends on how $\Psi$ acts on permutations of size $4$. Even though taking only two equivalence classes at size $4$ limits our options for larger sizes, it does not force the existence of only one equivalence class at each level, that is just selected for simplicity.

A question that arises from this counter example is whether it is possible to refine the conjecture by adding extra conditions to ensure the resulting statistics only depend on descent. One such result was recently proved by Grinberg in \cite{Grinbe}: 

\begin{defn}[\cite{Grinbe}] A permutation statistic $st$ is \emph{left shuffle compatible} if the multiset $\{\{st(\gamma)|\gamma \in \sigma \shuffle \phi \text{ and } \gamma_1=\sigma_1\}\}$ depends only on $st(\sigma),st(\phi),|\sigma|$ and $|\phi|$.
\end{defn}

\begin{prop}[\cite{Grinbe}] Any shuffle compatible and left shuffle compatible statistic is a descent statistic.
\end{prop}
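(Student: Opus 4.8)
The plan is to argue by induction on $n=|\alpha|$, showing that $\Des(\alpha)=\Des(\beta)$ with $|\alpha|=|\beta|=n$ forces $st(\alpha)=st(\beta)$. The engine of the induction is a single structural observation, which I will call the \emph{prepend lemma}: for a statistic $st$ that is both shuffle compatible and left shuffle compatible, the value $st(a\rho)$ obtained by prepending a fresh letter $a$ to a permutation $\rho$ depends only on $st(\rho)$ and $|\rho|$, and in particular not on the rank of $a$ among the entries of $a\rho$. Granting this, each permutation of size $n$ is controlled by its length-$(n-1)$ suffix, and the inductive step becomes immediate.

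First I would establish the prepend lemma. Fix a permutation $\rho$ of size $m$ and a value $a$ not occurring in $\rho$, and consider the shuffle of $\sigma=\rho$ with the singleton $\phi=(a)$. Every element of $\rho\shuffle(a)$ begins either with $\rho_1$ or with $a$, so this shuffle set is the disjoint union of the left shuffles (those $\gamma$ with $\gamma_1=\sigma_1=\rho_1$) and the unique shuffle beginning with $a$, namely $a\rho$ itself. Shuffle compatibility makes the multiset $\{\{st(\gamma)\mid\gamma\in\rho\shuffle(a)\}\}$ depend only on $st(\rho)$, $st((a))$, $m$ and $1$; left shuffle compatibility makes the sub-multiset coming from the left shuffles depend on the same data. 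Since every singleton has the same statistic value, both multisets depend only on $st(\rho)$ and $m$, and subtracting them leaves exactly $\{\{st(a\rho)\}\}$. Hence $st(a\rho)$ is a function of $st(\rho)$ and $m$ alone, which is the lemma. Equivalently, one first notes that shuffle and left shuffle compatibility together force right shuffle compatibility, the prepend lemma being its specialization to a singleton on the right.

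With the lemma in hand I would run the induction. For the base case, permutations of size $1$ are all equivalent, so the claim is vacuous; alternatively one may invoke the earlier proposition covering all sizes below $4$. For the inductive step, suppose $\Des(\alpha)=\Des(\beta)$ with $|\alpha|=|\beta|=n\ge 2$, and write $\alpha=\alpha_1\alpha'$ and $\beta=\beta_1\beta'$ with suffixes $\alpha',\beta'$ of size $n-1$. A descent of $\alpha'$ at position $j$ is exactly a descent of $\alpha$ at position $j+1$, so $\Des(\alpha')$ is determined by $\Des(\alpha)$, and likewise for $\beta'$; thus $\Des(\alpha')=\Des(\beta')$. The induction hypothesis gives $st(\alpha')=st(\beta')$, and since $\alpha$ and $\beta$ are obtained by prepending a fresh letter to $\alpha'$ and $\beta'$, the prepend lemma makes $st(\alpha)$ and $st(\beta)$ the same function of the common value $st(\alpha')=st(\beta')$ and the common size $n$, whence $st(\alpha)=st(\beta)$.

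The main obstacle is the prepend lemma, and more precisely the realization that the two compatibility hypotheses conspire to make prepending rank-blind: neither hypothesis alone controls a single prepended letter, but their difference isolates the lone shuffle $a\rho$ and shows that its statistic cannot depend on whether $a$ lies above or below the entries of $\rho$. Once this rank-independence is in place the descent at the first position becomes invisible to $st$, and the remainder of the argument is the routine bookkeeping of how descents behave under passing to a suffix.
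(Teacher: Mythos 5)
The paper does not prove this proposition at all---it is quoted from Grinberg's paper as a citation---so there is no in-paper argument to measure yours against; I can only check your proof against the definitions as this paper states them, and on that reading it is correct. The decomposition of $\rho\shuffle (a)$ into the left shuffles (those beginning with $\rho_1$) together with the single word $a\rho$, the observation that all singletons carry the same statistic value, and the multiset subtraction isolating $\{\{st(a\rho)\}\}$ (note $B$ is a sub-multiset of $A$ of cardinality one less, so the difference is a single well-defined value) are all sound, and the suffix induction is routine. One caveat deserves emphasis: your prepend lemma proves far more than the proposition asks. Since $st(a\rho)$ depends only on $st(\rho)$ and $|\rho|$, iterating shows $st$ is \emph{constant} on each $P_n$, so under the paper's definitions the hypotheses admit only essentially trivial statistics. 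That is not an error in your argument, but it is a strong signal that the definition of left shuffle compatibility as transcribed here is stronger than Grinberg's original, which constrains only pairs $(\sigma,\phi)$ whose first letters satisfy $\sigma_1>\phi_1$ and whose accompanying theorem assumes left \emph{and} right shuffle compatibility. Under that weaker hypothesis your subtraction still isolates $st(a\rho)$, but only for $a$ on one side of $\rho_1$, so the rank-blindness at the heart of your lemma fails (as it must, since otherwise even $\Des$ would be forced to be constant), and the induction requires the more delicate head-graft analysis that Grinberg actually carries out. In short: your proof is a correct and pleasantly short derivation of the statement as printed, but it would not survive the transition to the definition the citation really intends.
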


Note that our counter-example does not violate this result, as it is not left shuffle compatible:
\begin{eqnarray*}
\{\{\Psi(\gamma)\mid\gamma \in 12 \shuffle 34 \text{ and } \gamma_1=1\}\}&=&\{\{\Psi(1234),\Psi(1342),\Psi(1324)\}\}=\{\{1,1,-1\}\},\\
\{\{\Psi(\gamma)\mid\gamma \in 34 \shuffle 12 \text{ and } \gamma_1=3\}\}&=&\{\{\Psi(3124),\Psi(3142),\Psi(3412)\}\}=\{\{1,-1,-1\}\}.
\end{eqnarray*}

There are examples of shuffle compatible descent statistics that are not left shuffle compatible, so the above proposition does not offer a complete characterization. Nevertheless, it might be the best result to be obtained on the subject. 

\bibliographystyle{plain}
\bibliography{main}
\end{document}